\newtheorem{theorem}{Theorem}[section]
\newtheorem{corollary}[theorem]{Corollary}
\newtheorem{definition}[theorem]{Definition}
\newtheorem{lemma}[theorem]{Lemma}
\newtheorem{proposition}[theorem]{Proposition}
\newcommand\R{\mathbb{R}}
\newcommand\abs[1]{|#1|}
\newcommand{\inner}[2]{\langle#1,#2\rangle}
\newcommand\norm[1]{\|#1\|}
\newcommand\Norm[1]{\left\|#1\right\|}
\newcommand\set[1]{\{#1\}}
\newcommand\Set[1]{\left\{#1\right\}}
\DeclareMathOperator*{\argmin}{argmin}
\title{H\"older continuity of the steepest descent direction
  for multiobjective optimization}
\author{B. F. Svaiter\thanks{IMPA, Estrada Dona Castorina 110,
    22460-320 Rio de Janeiro, Brazil ({\texttt{benar@impa.br}}) 
    tel: 55 (21) 25295112, fax: 55 (21)25124115.  }\hspace{.5em}
  \thanks{Partially supported by CNPq
    grant 306247/2015-1
    and by 
    FAPERJ grant Cientistas de Nosso Estado
    E-26/201.584/2014
  }}
\begin{document}
\maketitle

\begin{abstract}
  The aim of this manuscript is to characterize the continuity properties of
  the multiobjective steepest descent direction for smooth objective
  functions. 
  We will show that this direction is H\"older continuous with optimal exponent
  1/2.
  In particular, this direction fails to be Lipschitz continuous even for 
  polynomial objectives.

  \bigskip

  \bigskip

  2000 Mathematics Subject Classification: 90C29, 90C30.

%



  \bigskip

  Key words: multiobjective optimization; Pareto optimality,
  steepest descent direction, H\"older continuity.

  \bigskip

\end{abstract}

\section{Introduction}

In multiobjective optimization (MO) problems, many functions on the
same argument have to be simultaneously minimized. Since in general
there is no common minimizer for these functions, one shall rely in
another notion of optimality.
Up to now, in this settings, the most useful definition of optimality is
that of Pareto~\cite{paretobook}:
A point is a Pareto optimal if its image is minimal in the image
of the feasible set with respect to the
componentwise (partial) order.
Vector optimization in a generalization of MO where a closed convex cone is used to define
the partial order for which minimal elements are to be computed. If the cone is the positive
orthant one retrieves the componentwise order of MO.

\emph{Descent methods} for multiobjective and vector optimization
is, presently, an area of intense
research
( see~%
\cite{MR1778656,%
MR2049673,%
MR2178482,%
MR2515788,%
MR2835535,%
MR2860289,%
MR2886144,%
MR3029305,%
MR3175444,%
MR3093459,%
MR3166148,%
Fukuda2014585,%
MR3391220,%
%
MR3556811}
and the references therein).
These are iterative methods
in which 
\emph{all} objective functions  decrease
along the generated sequences.
As far as we now know,
the first one of these methods was proposed by Mukai in~\cite{MR0567375}.
In this work,
six descent methods for
\emph{constrained} MO were proposed,
three of which become, in the unconstrained case,
the steepest descent method.
In his pioneer work, Mukai neither studied the continuity properties of
the MO steepest descent direction nor considered the convex case,
proving that any limit point of the generated sequences satisfies a first
order necessary condition for optimality.

Fliege and Svaiter, independently, reinvented the steepest descent
direction and method~\cite{MR1778656} for unconstrained MO and proved
continuity of the this direction  for smooth objective
functions.
They also established 
convergence of the generated sequences in the convex case,
for 
stepsizes computed using an Armijo-type linesearch, under suitable
assumptions (e.g.\ boundedness of level sets).
In this manuscript we are concerned with the continuity properties of
the steepest descent direction for MO.
We will show that when the objective functions have Lipschitz continuous
gradients this direction
 is locally H\"older continuous with optimal exponent
$1/2$. Hence, in this case, the MO steepest descend direction 
is not Lipschitz continuous.

\section{Basic Definitions and Results}
\label{sec:bdr}

From now on $\Omega$ is a subset of $\R^n$, $f_1,\dots,f_m$
are scalar function on $\Omega$, and
\begin{align}
  \label{eq:f}
  f:\Omega\to\R^m, \quad f(x)=(f_1(x),\dots, f_m(x)).
\end{align}
The MO  problem is
\begin{align}
  \min \;\; f(x) \qquad x\in C
\end{align}
where $C\subseteq \Omega$ is the feasible set.
A feasible point $\bar x$ is a Pareto optimum for this problem if
\begin{align*}
  x\in C,\;f(x)\leq f(\bar x)\Rightarrow f(x)=f(\bar x).
\end{align*}
Henceforth we assume that
\begin{description}
\item[A1] $C=\Omega$ is open;
\item[A2] each $f_i$, $i=1,\dots,m$, is differentiable
\end{description}

A vector $v\in \R^n$ is a \emph{multiobjective descent direction}
at $x$
if
\[
\nabla \inner{f_i(x)}{v}<0,\quad i=1,\dots,m.
\]
It is trivial to verify that if $v$ is a descent direction at $x$,
then
\[
f_i(x+tv)<f_i(x)\qquad i=1,\dots, m
\]
for $t>0$ small enough, whence, at Pareto optimal points in the interior of the
feasible set there is no descent direction.
Following~\cite{MR1778656}, a point $x$ where
there is no descent direction will be called Pareto critical.
Since Pareto criticality is a necessary condition for optimality
(in the unconstrained case), is worth observing that
\begin{align*}
  x\text{ Pareto criticall}\iff
  \mathrm{range}(Df(x))\cap(-\R_{+ +}^m)=\emptyset.
\end{align*}
We believe that the condition at the right hand-side of the above
implication seems to be an acceptable extension of the notion of criticality for
scalar functions.
A natural question is how to compute or choose a ``reasonable''
descent direction at a given point.
\begin{definition}[Mukai~\cite{MR0567375}, Fliege \& Svaiter~\cite{MR1778656}]
  \label{df:sdd}
  The steepest descent direction for $f$ at $x\in\Omega$ is
  $\Lambda f(x)$,
  \begin{align}
    \label{eq:vfx}
   \Lambda f(x):= \argmin_{v\in \R^n}\;\max_{i=1,\dots,m}\inner{v}{\nabla f_i(x)}+\dfrac12\norm{v}^2\;.
  \end{align}
\end{definition}

It is convenient to define as in~\cite{MR1778656} the function
$\theta_f(x)$ as the optimal value of the functional whose minimizer
is $\Lambda f(x)$ in the above definition, that is,
\begin{align}
  \label{eq:alpha-or-theta}
  \theta_f(x)=\min_{v\in E}
  \;\max_{i=1,\dots,m}\inner{v}{\nabla f_i(x)}+\dfrac12\norm{v}^2\;.
\end{align}
Both $\Lambda_f(x)$ and $\theta_f(x)$ can be obtained minimizing a
convex quadratic function under linear constraints or solving its dual
by maximizing a concave quadratic function in in the unit simplex.

\begin{proposition}[\cite{MR0567375,MR1778656}]
  \label{pr:dual}
  For any,  $x\in\Omega$
  $(\tau,v)=(\theta_f(x),\Lambda f(x))$ is the solution of
  \begin{align}
    \label{eq:eqvfc}
    \begin{array}{ll}
      \min& \tau+\dfrac12 \norm{v}^2\\
      \text{s.t.}&\;\inner{v}{\nabla f_i(x)}\leq \tau,
  \end{array}
  \end{align}
  a problem whose dual is
  \begin{align}
    \label{eq:dual}
    \begin{array}{ll}
      \max
      &-(1/2)
         \Norm{\sum_{i=1}^m\alpha_i\nabla f_i(x)}^2\\[.6em]
      \text{s.t.}
      & \displaystyle\sum_{i=1}^m\alpha_i=1,\;\;\alpha\geq 0.
    \end{array}
  \end{align}
\end{proposition}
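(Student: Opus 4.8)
The plan is to split the proposition into two independent claims: first, that the constrained program \eqref{eq:eqvfc} is merely an epigraph reformulation of the defining minimization \eqref{eq:alpha-or-theta}, so that its solution is exactly $(\theta_f(x),\Lambda f(x))$; and second, that \eqref{eq:dual} is precisely the Lagrangian dual of \eqref{eq:eqvfc}, obtained by a direct computation.

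For the first claim I would start by recording that the objective in \eqref{eq:alpha-or-theta}, namely $v\mapsto \max_{i}\inner{v}{\nabla f_i(x)}+\tfrac12\norm{v}^2$, is continuous, strongly convex (owing to the quadratic term) and coercive; hence it attains its infimum at a unique point, which is what makes $\Lambda f(x)$ in \eqref{eq:vfx} and $\theta_f(x)$ in \eqref{eq:alpha-or-theta} well defined. I would then introduce the auxiliary scalar $\tau$ and observe that, for each fixed $v$, the inner problem $\min\{\tau:\inner{v}{\nabla f_i(x)}\le\tau,\ i=1,\dots,m\}$ is solved by $\tau=\max_i\inner{v}{\nabla f_i(x)}$. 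Substituting this value back into the objective of \eqref{eq:eqvfc} reproduces the objective of \eqref{eq:alpha-or-theta}, so the two problems share the same optimal value and the $v$-component of the solution of \eqref{eq:eqvfc} coincides with the minimizer in \eqref{eq:alpha-or-theta}. This settles the first claim.

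For the second claim I would attach multipliers $\alpha_i\ge 0$ to the constraints $\inner{v}{\nabla f_i(x)}-\tau\le 0$ and form the Lagrangian
\[
L(\tau,v,\alpha)=\tau\Bigl(1-\sum_{i=1}^m\alpha_i\Bigr)+\tfrac12\norm{v}^2+\Inner{v}{\sum_{i=1}^m\alpha_i\nabla f_i(x)}.
\]
Minimizing over $\tau\in\R$ forces $\sum_i\alpha_i=1$ (otherwise the infimum is $-\infty$), while minimizing the remaining strongly convex quadratic over $v$ yields the minimizer $v=-\sum_i\alpha_i\nabla f_i(x)$ together with the value $-\tfrac12\Norm{\sum_i\alpha_i\nabla f_i(x)}^2$. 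Collecting the surviving terms gives exactly the dual \eqref{eq:dual}.

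Finally I would invoke convex quadratic programming duality to tie the two problems together: \eqref{eq:eqvfc} is a convex program with affine constraints that is trivially feasible (e.g.\ $v=0$, $\tau=1$ is strictly feasible), so strong duality holds, the dual supremum is attained, and the primal solution is recovered from any dual optimizer $\alpha^\star$ through $\Lambda f(x)=-\sum_i\alpha_i^\star\nabla f_i(x)$. I do not expect a genuine obstacle here; the only point demanding any care is the strong-duality/attainment step, and because the constraints are affine and the feasible set is nonempty no constraint qualification beyond feasibility is required, so even that reduces to a standard convex-analytic fact.
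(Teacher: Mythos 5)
Your proposal is correct and follows essentially the same route as the paper, whose proof merely asserts the epigraph equivalence and writes down the Lagrangian of \eqref{eq:eqvfc} (with a typo, $\norm{v}^2$ in place of $\tfrac12\norm{v}^2$, which your computation silently corrects), so all you have added is the partial minimization over $\tau$, the explicit minimization over $v$, and the strong-duality/attainment remark that the paper leaves implicit. One point in your favor is worth flagging: you prudently prove only that \eqref{eq:eqvfc} has optimal value $\theta_f(x)$ and $v$-component $\Lambda f(x)$, which is the correct reading of the proposition, because the literal claim that the optimal $\tau$ equals $\theta_f(x)$ fails whenever $\Lambda f(x)\neq 0$ --- complementary slackness gives $\tau^\ast=\max_i\inner{\Lambda f(x)}{\nabla f_i(x)}=-\norm{\Lambda f(x)}^2=2\theta_f(x)$, while the pair $(\theta_f(x),\Lambda f(x))$ is feasible with objective value $0>\theta_f(x)$.
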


\begin{proof}
  Equivalence between \eqref{eq:vfx} and \eqref{eq:eqvfc} as well as
  optimality of $(\theta_f(x),\Lambda_f(x))$ for the second problem
  holds trivially.
  The 
  Lagrangian of \eqref{eq:eqvfc} is
  \begin{align}
    \label{eq:lg}
    \mathcal{L}((\tau,v),\alpha)=
    \tau+\norm{v}^2+\sum_{i=1}^m\alpha_i(\inner{\nabla f_i(x)}{v}-\tau),
  \end{align}
  which trivially implies the second part of the proposition.
\end{proof}

Proposition~\ref{pr:dual} has two \emph{quite trivial}, albeit interesting,
consequences.

\begin{corollary}
  \label{cr:mnorm}
  For any $x\in\Omega$, $\theta_f(x)=-(1/2)\norm{\Lambda_f(x)}^2$
  and $-\Lambda_f(x)$ is the minimal norm element in the convex
  hull of $\set{\nabla f_1(x),\dots,\nabla f_m(x)}$, that is, of the set
   \begin{align*}
     \overline{U}=\Set{u\;:\; u=\sum_{i=1}^m\alpha_i\nabla f_i(x)\,;\;\;  
     \alpha_i\geq 0,\,i=1,\dots,m;\;\;\sum_{i=1}^m\alpha_i=1 }
   \end{align*} 
   (hence $-\Lambda_f(x)$ is the orthogonal projection of the origin onto
   $\overline{U}$).
\end{corollary}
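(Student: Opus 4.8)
The plan is to read both conclusions directly off the dual problem \eqref{eq:dual} furnished by Proposition~\ref{pr:dual}. Writing $u=\sum_{i=1}^m\alpha_i\nabla f_i(x)$, the feasible region of \eqref{eq:dual} is exactly the simplex parametrization of $\overline{U}$, so \eqref{eq:dual} is the problem $\max_{u\in\overline{U}}-(1/2)\norm{u}^2$, equivalently $\min_{u\in\overline{U}}\norm{u}^2$. Now $\overline{U}$, being the convex hull of finitely many vectors, is a nonempty compact convex set, and the minimal-norm element of such a set exists, is unique, and coincides with the orthogonal projection of the origin onto it. Calling this element $u^\star$, the optimal value of \eqref{eq:dual} is $-(1/2)\norm{u^\star}^2$, and the corollary's parenthetical claim about projection is already contained in this observation.

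The first substantive step is to invoke strong duality between \eqref{eq:eqvfc} and \eqref{eq:dual}. Since the constraints of \eqref{eq:eqvfc} are affine and the problem is a convex quadratic program that is trivially feasible (take $v=0$ and $\tau$ large), there is no duality gap and both values are attained. The primal optimal value is $\theta_f(x)$ by construction: \eqref{eq:eqvfc} arises from \eqref{eq:vfx} via the substitution $\tau=\max_i\inner{v}{\nabla f_i(x)}$, so its value reproduces \eqref{eq:alpha-or-theta}. Strong duality therefore gives $\theta_f(x)=-(1/2)\norm{u^\star}^2$.

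Next I would recover the primal minimizer from the dual one using the Lagrangian \eqref{eq:lg} of \eqref{eq:eqvfc}. Fixing a dual optimizer $\alpha^\star$ and minimizing over $(\tau,v)$, the coefficient of $\tau$ forces $\sum_i\alpha_i^\star=1$, while the strongly convex quadratic in $v$ is minimized at $v=-\sum_i\alpha_i^\star\nabla f_i(x)=-u^\star$. As this minimizer is precisely $\Lambda_f(x)$, I obtain $\Lambda_f(x)=-u^\star$; hence $-\Lambda_f(x)=u^\star$ is the minimal-norm element of $\overline{U}$, i.e.\ the projection of the origin onto $\overline{U}$. Substituting $\norm{u^\star}=\norm{\Lambda_f(x)}$ into the value identity of the previous step yields $\theta_f(x)=-(1/2)\norm{\Lambda_f(x)}^2$, completing the argument.

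As the authors themselves note, there is no genuinely hard step once Proposition~\ref{pr:dual} is available. The only point demanding care is the primal--dual bookkeeping: one must check that the value of \eqref{eq:eqvfc} is $\theta_f(x)$ itself rather than $\theta_f(x)$ inflated by a quadratic term, and that the recovered $v$-component is $-u^\star$ and not $u^\star$. Both are immediate from the explicit Lagrangian, so nothing beyond finite-dimensional convex duality and the projection theorem for closed convex sets is needed.
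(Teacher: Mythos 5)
Your proposal is correct and follows exactly the route the paper intends: the corollary is stated there without proof as a ``quite trivial'' consequence of Proposition~\ref{pr:dual}, and you simply make explicit the standard details (strong duality for a convex QP with affine constraints, attainment, uniqueness of the minimal-norm element as the projection of the origin, and recovery of the primal minimizer $v=-u^\star$ from the Lagrangian). Your bookkeeping is sound; note only that you correctly use the factor $\tfrac12\norm{v}^2$ in the Lagrangian, whereas the paper's displayed Lagrangian \eqref{eq:lg} has a typo ($\norm{v}^2$ without the $\tfrac12$).
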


Next we revise some useful properties of the multiobjective steepest
descent direction.

\begin{lemma}[\mbox{\cite[Lemma 1]{MR1778656}}
  ]
  For any  $x\in \Omega$,
  \begin{enumerate}
  \item if $x$ is Pareto critical, then $\theta_f(x)=0$ and
    $\Lambda f(x)=0$;
  \item If $x$ is not Pareto critical then $\theta_f(x)< 0$,
    $\Lambda f(x)\neq 0$ and
    \begin{align*}
      \inner{\Lambda f(x)}{\nabla f_i(x)}\leq -\dfrac12\norm{\Lambda f(x)}^2
      \qquad i=1,\dots,m.
    \end{align*}
  \end{enumerate}
  If, additionally $\nabla f_i$, $i=1,\dots,m$, are continuous, then
  $x\mapsto \Lambda f(x)$ and $x\mapsto\theta_f(x)$ are continuous.
\end{lemma}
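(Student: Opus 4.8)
The plan is to work throughout with the function $\phi_x(v):=\max_{i=1,\dots,m}\inner{v}{\nabla f_i(x)}+\tfrac12\norm{v}^2$ appearing in \eqref{eq:vfx} and \eqref{eq:alpha-or-theta}, which is strictly convex and coercive in $v$; hence $\Lambda f(x)$ is its unique minimizer and $\theta_f(x)=\phi_x(\Lambda f(x))$ is attained. The single fact driving items (1) and (2) is that $\phi_x(0)=0$, so $\theta_f(x)\le\phi_x(0)=0$ for every $x$, with equality forcing the minimizer to be $0$ by strict convexity.

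For item (1), Pareto criticality of $x$ means there is no descent direction, i.e.\ $\max_{i}\inner{v}{\nabla f_i(x)}\ge0$ for every $v$; thus $\phi_x(v)\ge\tfrac12\norm{v}^2\ge0=\phi_x(0)$, the unique minimizer is $v=0$, and $\theta_f(x)=0$, $\Lambda f(x)=0$. For item (2), I would take a descent direction $v_0$, set $\beta:=\max_i\inner{v_0}{\nabla f_i(x)}<0$, and note $\phi_x(tv_0)=t\beta+\tfrac{t^2}2\norm{v_0}^2<0$ for small $t>0$, so $\theta_f(x)<0$ and $\Lambda f(x)\ne0$. For the componentwise inequality I invoke Corollary~\ref{cr:mnorm}: as $-\Lambda f(x)$ is the projection of the origin onto $\overline U(x):=\conv\set{\nabla f_1(x),\dots,\nabla f_m(x)}$, the variational inequality of the projection reads $\inner{\Lambda f(x)}{u+\Lambda f(x)}\le0$ for all $u\in\overline U(x)$; with $u=\nabla f_i(x)$ this gives $\inner{\Lambda f(x)}{\nabla f_i(x)}\le-\norm{\Lambda f(x)}^2$, even sharper than the stated bound $-\tfrac12\norm{\Lambda f(x)}^2$.

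For the continuity statement I would treat $\theta_f$ first. Writing $\Delta$ for the unit simplex, Corollary~\ref{cr:mnorm} (equivalently the dual \eqref{eq:dual}) yields $-\theta_f(x)=\tfrac12\norm{\Lambda f(x)}^2=\min_{\alpha\in\Delta}\tfrac12\Norm{\sum_{i=1}^m\alpha_i\nabla f_i(x)}^2$. The integrand is jointly continuous in $(x,\alpha)$ once the $\nabla f_i$ are continuous, and $\Delta$ is a fixed compact set independent of $x$; a routine uniform-continuity argument (or Berge's maximum theorem) then shows the value function $\theta_f$ is continuous.

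Continuity of $\Lambda f$ is where I expect the real work to be, since the minimizing $\alpha\in\Delta$ in \eqref{eq:dual} need not be unique, so one cannot read off $\Lambda f(x)=-\sum_i\alpha_i(x)\nabla f_i(x)$ from a continuous selection $\alpha(x)$. I would instead argue by subsequences, exploiting uniqueness of the minimal-norm element. Fix $x_k\to x$; from $\norm{\Lambda f(x_k)}^2=-2\theta_f(x_k)$ and continuity of $\theta_f$, the sequence $\seq{\Lambda f(x_k)}$ is bounded and $\norm{\Lambda f(x_k)}\to\norm{\Lambda f(x)}$. Along any subsequence with $\Lambda f(x_{k_j})\to\bar v$, write $-\Lambda f(x_{k_j})=\sum_i\alpha_i^{(j)}\nabla f_i(x_{k_j})$ with $\alpha^{(j)}\in\Delta$; passing to a further subsequence with $\alpha^{(j)}\to\bar\alpha\in\Delta$ and using continuity of the gradients gives $-\bar v=\sum_i\bar\alpha_i\nabla f_i(x)\in\overline U(x)$. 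Since $\norm{\bar v}=\norm{\Lambda f(x)}$ equals the minimal norm over $\overline U(x)$, and the minimal-norm element is unique, $\bar v=\Lambda f(x)$. As every convergent subsequence of the bounded sequence $\seq{\Lambda f(x_k)}$ has the same limit $\Lambda f(x)$, we conclude $\Lambda f(x_k)\to\Lambda f(x)$, i.e.\ $\Lambda f$ is continuous.
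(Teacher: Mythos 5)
This lemma is stated in the paper without proof---it is imported verbatim from \cite[Lemma 1]{MR1778656}---so there is no in-paper argument to match yours against, and your proposal must stand on its own merits; on those it is correct and complete. Items 1 and 2 follow exactly as you say from $\theta_f(x)\le\phi_x(0)=0$ together with strict convexity and coercivity of your $\phi_x$ (only beware a notational clash: in the proof of Theorem~\ref{th:holder} the paper reserves $\phi_x$ for the max \emph{without} the quadratic term). Two remarks on your route. First, for the componentwise inequality you pass through Corollary~\ref{cr:mnorm} and the projection variational inequality, which in fact yields the sharper bound $\inner{\Lambda f(x)}{\nabla f_i(x)}\le-\norm{\Lambda f(x)}^2$; the stated weaker bound has a one-line proof needing no duality at all: since $\theta_f(x)=\max_i\inner{\Lambda f(x)}{\nabla f_i(x)}+\tfrac12\norm{\Lambda f(x)}^2\le 0$, one reads off $\inner{\Lambda f(x)}{\nabla f_i(x)}\le-\tfrac12\norm{\Lambda f(x)}^2$ directly, which is essentially how the original reference argues and keeps the lemma independent of Proposition~\ref{pr:dual}. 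Second, the continuity part is where the real content lies, and your argument is sound and free of circularity: you first establish continuity of $\theta_f$ via the dual representation $-2\theta_f(x)=\min_{\alpha\in\Delta}\Norm{\sum_{i=1}^m\alpha_i\nabla f_i(x)}^2$ over the fixed compact simplex (a standard parametric-minimum fact), then use $\norm{\Lambda f(x_k)}^2=-2\theta_f(x_k)$ to get boundedness and norm convergence, and finally identify every cluster point of $\seq{\Lambda f(x_k)}$ with $\Lambda f(x)$ by compactness of $\Delta$, continuity of the gradients, and uniqueness of the minimal-norm element of $\conv\set{\nabla f_1(x),\dots,\nabla f_m(x)}$. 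An alternative, closer in spirit to the paper's own proof of Theorem~\ref{th:holder}, would compare the two $1$-strongly convex problems at nearby points to extract a quantitative modulus of continuity; your qualitative subsequence argument assumes less and fully suffices for the statement as given.
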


\section{H\"older continuity of the MO steepest descent direction}

The main result of this work is that, 
 under the
assumption of Lipschitz continuity of the objective functions'
gradients, the MO steepest descent direction is
locally H\"older continuous with optimal exponent $1/2$.
We will show that in general, even for gradients with
polynomial components, the MO steepest descent direction and,
equivalently, the minimal norm element on the convex hull of the
gradients of the objective function, fails to be Lipschitz continuous.

\begin{theorem}
  \label{th:holder}
  Suppose that $W\subseteq \Omega$ is convex, bounded, and
  $\nabla f_i$, $i=1,\dots,m$, are $L$-Lipschitz continuous on $W$,
  that is,
  \begin{align}\label{eq:hlip}
    \norm{\nabla f_i(y)-\nabla f_i(z)}\leq L\norm{y-z}
    \qquad \forall y,z\in W,\;i=1,\dots,m.
  \end{align}
  Then
  \begin{enumerate}
  \item $x\mapsto\Lambda f(x)$ is H\"older continuous on $W$;
  \item $x\mapsto\norm{\Lambda_f(x)}$ is Lipschitz continuous on $W$.
  \end{enumerate}
\end{theorem}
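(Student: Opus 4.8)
The plan is to recast the whole statement in terms of the convex hulls of the gradients. For $x\in W$ write $K(x)=\conv\{\nabla f_1(x),\dots,\nabla f_m(x)\}$, so that by Corollary~\ref{cr:mnorm} the vector $-\Lambda f(x)$ is the orthogonal projection of the origin onto $K(x)$ and $\norm{\Lambda f(x)}=\dist(0,K(x))$. The argument then rests on two facts: that the set-valued map $x\mapsto K(x)$ is Lipschitz in the Hausdorff metric, and that the squared-norm functional minimized over $K(x)$ is $1$-strongly convex. First I would establish the Hausdorff estimate: if $u=\sum_i\alpha_i\nabla f_i(x)\in K(x)$ with $\alpha$ in the unit simplex, then $u'=\sum_i\alpha_i\nabla f_i(y)\in K(y)$ satisfies $\norm{u-u'}\le\sum_i\alpha_i\norm{\nabla f_i(x)-\nabla f_i(y)}\le L\norm{x-y}$ by \eqref{eq:hlip}; exchanging the roles of $x$ and $y$ shows the Hausdorff distance obeys $d_H(K(x),K(y))\le L\norm{x-y}$.

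Part (2) is then immediate. Since $\norm{\Lambda f(x)}=\dist(0,K(x))$ and the distance from a fixed point to a set is $1$-Lipschitz with respect to the Hausdorff metric (for any $a\in K(x)$ there is $b\in K(y)$ with $\norm{a-b}\le d_H(K(x),K(y))$, whence $\dist(0,K(y))\le\norm{a}+d_H(K(x),K(y))$, and one takes the infimum over $a$ and symmetrizes), I obtain $\bigl|\,\norm{\Lambda f(x)}-\norm{\Lambda f(y)}\,\bigr|\le L\norm{x-y}$.

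For part (1), set $p=-\Lambda f(x)$, $q=-\Lambda f(y)$, and $\delta=L\norm{x-y}$. The engine is the quadratic growth inequality coming from strong convexity: since $p$ minimizes $\tfrac12\norm{\cdot}^2$ over the convex set $K(x)$, its optimality condition $\inner{p}{u-p}\ge 0$ yields $\tfrac12\norm{u-p}^2\le\tfrac12\norm{u}^2-\tfrac12\norm{p}^2$ for every $u\in K(x)$. I would then choose $u\in K(x)$ with $\norm{u-q}\le\delta$ (possible since $q\in K(y)$ and $d_H(K(x),K(y))\le\delta$) and bound the right-hand side: using part (2), $\norm{u}\le\norm{q}+\delta\le\norm{p}+2\delta$, so $\tfrac12\norm{u}^2-\tfrac12\norm{p}^2\le 2\delta\norm{p}+2\delta^2$. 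Because $W$ is bounded and each $\nabla f_i$ is Lipschitz, hence bounded, on $W$, the quantity $\norm{p}=\dist(0,K(x))$ is bounded by a constant $M$ uniformly on $W$; combining the displays gives $\norm{u-p}^2\le 4M\delta+4\delta^2$, and therefore $\norm{p-q}\le\norm{p-u}+\norm{u-q}\le 2\sqrt{M\delta+\delta^2}+\delta$. Since $\delta=L\norm{x-y}$ and $\norm{x-y}$ is bounded by $\operatorname{diam}W$, the linear term is dominated by the square-root term, and I conclude $\norm{\Lambda f(x)-\Lambda f(y)}\le c\,\norm{x-y}^{1/2}$ for a constant $c$ depending only on $L$, $M$, and $\operatorname{diam}W$, i.e.\ H\"older continuity with exponent $1/2$.

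The hard part is the sharp extraction of the exponent $1/2$ in the last paragraph: the square root is produced by the quadratic growth estimate, and the delicate point is controlling the cross term $\delta\norm{p}$ by the uniform bound $M$ rather than letting it degrade the rate. I would also remark that the degenerate Pareto-critical case $p=0$ is harmless, since there the same computation in fact returns a \emph{Lipschitz} bound; the genuinely non-Lipschitz behaviour, and the \emph{optimality} of the exponent $1/2$, is a separate matter to be settled by the polynomial example rather than by this estimate.
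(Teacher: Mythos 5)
Your proof is correct, but for item 1 it takes a genuinely different route from the paper. The paper works on the primal side: it introduces $\phi_x(v)=\max_i\inner{\nabla f_i(x)}{v}$, notes $\abs{\phi_y(v)-\phi_z(v)}\leq L\norm{y-z}\norm{v}$, and exploits the $1$-strong convexity of $v\mapsto\phi_z(v)+\frac12\norm{v}^2$ together with optimality of $\Lambda f(z)$ to get $\theta_f(y)\geq\theta_f(z)+\frac12\norm{\Lambda f(y)-\Lambda f(z)}^2-L\norm{y-z}\,\norm{\Lambda f(y)}$; adding this to its mirror image cancels the $\theta_f$ terms and yields $\norm{\Lambda f(y)-\Lambda f(z)}^2\leq L\norm{y-z}\bigl(\norm{\Lambda f(y)}+\norm{\Lambda f(z)}\bigr)\leq 2LM\norm{y-z}$, hence the clean constant $\sqrt{2LM}$. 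You instead work entirely on the geometric side of Corollary~\ref{cr:mnorm}: the hull map $x\mapsto K(x)$ is $L$-Lipschitz in Hausdorff distance (your same-weights transfer is exactly the construction the paper uses, but only for item 2), and the quadratic growth of the projection of the origin, $\frac12\norm{u-p}^2\leq\frac12\norm{u}^2-\frac12\norm{p}^2$, converts that into a $1/2$-H\"older estimate for the projection itself. Some points of comparison: your item 1 consumes item 2 (harmless, since you prove it first; you could also have bypassed it, bounding $\norm{q}\leq M$ directly), whereas the paper's two items are independent; your constant is slightly worse, roughly $2\sqrt{L(M+L\operatorname{diam}W)}$, dragging in $\operatorname{diam}W$ through the absorbed linear term $\delta$, versus the paper's $\sqrt{2LM}$ -- though the exponent, which is what matters by Proposition~\ref{lm:not-lip}, is the same. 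What your argument buys is portability: it is one-sided (it invokes the optimality condition of only one of the two projections, where the paper symmetrizes) and it shows quite generally that the minimal-norm selection of any compact-convex-valued map that is Lipschitz in Hausdorff distance is locally $1/2$-H\"older, with no reference to the max-structure of $\phi_x$; your parenthetical observation that $\nabla f_i$ Lipschitz on bounded $W$ implies $M<\infty$ is also slightly more careful than the paper's bare assertion. Your closing remark that the case $p=0$ gives a Lipschitz bound is correct ($\norm{p-q}\leq 3\delta$ there), and you rightly defer optimality of the exponent to the separate example.
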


\begin{proof}
  Define, as in \cite[Section 3]{MR1778656}, for $x\in\Omega$
  \begin{align}
    \label{eq:gx}
    \phi_x(v)=\max\set{\inner{\nabla f_i(x)}{v}\;:\;i=1,\dots,m}\qquad
    (v\in\R^n).
  \end{align}
  Observe that $\phi_x$ is a convex sublinear functional and
  for $y,z\in W$, $\abs{\phi_y(v)-\phi_z(v)}\leq L\norm{y-z}\norm{v}$.
  Let
  \begin{align*}
    M=\max_{i=1,\dots,m,\;x\in W}\norm{\nabla f_i(x)}.
  \end{align*}
  Since $W$ is bounded,
  $M<\infty$.

  Take $y,z\in W$ and let $v_y=\Lambda f(y)$, $\alpha_y=\theta_f(y)$,
  $v_z=\Lambda f(z)$ and $\alpha_z=\theta_f(z)$.
  In view of Definition~\ref{df:sdd} and \eqref{eq:gx},
  the solution and optimal
  value of the problems
  \begin{align}
    \min \;\phi_y(v)+\dfrac12\norm{v}^2,\qquad
    \min \;\phi_z(v)+\dfrac12\norm{v}^2, \quad\qquad v\in\R^n
  \end{align}
  are, respectively, $v_y,\alpha_y$ and $v_z,\alpha_z$. Therefore,
  \begin{align*}
    \phi_y(v)+\dfrac12\norm{v}^2
    & \geq
    \phi_z(v)+\dfrac12\norm{v}^2-L\norm{y-z}\norm{v}
    \\
    & \geq
    \alpha_z+\dfrac12\norm{v-v_z}^2 -L\norm{y-z}\norm{v}
  \end{align*}
  where the first inequality follows~\eqref{eq:hlip} and \eqref{eq:gx}
  and the second inequality follows from 
  the $1$-strong convexity of $v\mapsto g_z(v)+\norm{v}^2/2$
   and the optimality of $v_z$ for this function.
  Substituting $v_y$ for $v$ in the above inequalities we conclude that
  \begin{align*}
    \alpha_y\geq\alpha_z+\dfrac12\norm{v_y-v_z}^2-L\norm{y-z}\norm{v_y}.
  \end{align*}
  By the same token,
  \begin{align*}
    \alpha_z\geq\alpha_y+\dfrac12\norm{v_y-v_z}^2-L\norm{y-z}\norm{v_z},
  \end{align*}
  Adding the above inequalities we conclude that
  \begin{align*}
    \norm{v_y-v_z}^2\leq L\norm{y-z}(\norm{v_y}+\norm{v_z}).
  \end{align*}
  Since $W$ is bounded,
  $M=\max_{i=1,\dots,m,\;x\in W}\norm{\nabla f_i(x)}<\infty$.  It
  follows from Corollary~\ref{cr:mnorm} that $\norm{v_y}\leq M$ and
  $\norm{v_z}\leq M$.  Hence
  \begin{align*}
    \norm{\Lambda_f(y)-\Lambda_f(z)}=
    \norm{v_y-v_z}\leq \sqrt{2LM}\;\norm{y-z}^{1/2},
  \end{align*}
  which proves the H\"older
  continuity of $x\mapsto \Lambda f(x)$ on $W$ with exponent $1/2$.

  To prove item 2, let $\overline{U}_y$ and $\overline{U}_z$ be the
  convex hulls of the gradients of the objective functions at $y$ and $z$,
  respectively.
  The minimal norm element of $\overline{U}_y$ is
  \begin{align*}
    -\Lambda_f(y)=\sum_{i=1}^m\alpha^*_i\nabla f_i(y)
  \end{align*}
   for some $\alpha^*\in\R^m_+$ such that $\sum_{i=1}^m\alpha^*_i=1$
   Define $\tilde u=\sum_{i=1}^m\alpha_i^*\nabla f_i(z)$.
   Then $ \tilde u\in\overline{U}_z$ and
  \begin{align*}
    \norm{-\Lambda_f(y)-\tilde u} \leq
    \sum_{i=1}^m\alpha_i^*\norm{\nabla f_i(y)-\nabla f_i(z)}
    \leq
    \sum_{i=1}^m\alpha_i^*L\norm{y-z}=L\norm{y-z}
  \end{align*}
  Since $-\Lambda_f(z)$ is the minimal norm element of $\overline{U}_z$,
  \begin{align*}
    \norm{\Lambda_f(z)}\leq\norm{\tilde u}\leq\norm{\Lambda_f(y)}
    + \norm{-\Lambda_f(y)-\tilde u}\leq\norm{\Lambda_f(y)}
    +L\norm{y-z}
  \end{align*}
  By the same token,
  $\norm{\Lambda_f(y)}\leq\norm{\Lambda_f(z)}+L\norm{y-z}$ and the
  conclusion follows.
\end{proof}

Finally, we establish the optimality of the H\"older exponent $1/2$.

\begin{proposition}
  \label{lm:not-lip}
  Under the assumptions of Theorem~\ref{th:holder},
  the H\"older exponent $1/2$ derived there
  can not be improved.
\end{proposition}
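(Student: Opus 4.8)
The plan is to prove optimality by exhibiting a single explicit instance that saturates the estimate of Theorem~\ref{th:holder}: objective functions satisfying its hypotheses, together with a point $x_0\in W$ and a direction $w$ for which $\norm{\Lambda f(x_0+tw)-\Lambda f(x_0)}\geq c\,t^{1/2}$ for all small $t>0$. Since Theorem~\ref{th:holder} already supplies the matching upper bound, producing such an example is all that is needed. By Corollary~\ref{cr:mnorm} the whole question is geometric: $-\Lambda f(x)$ is the projection of the origin onto $\conv\set{\nabla f_1(x),\dots,\nabla f_m(x)}$, so I only have to design gradients which, restricted to the segment $t\mapsto x_0+tw$, form a polytope that moves so that the foot of the origin travels at rate exactly $t^{1/2}$.

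To locate where such behaviour can occur I would read off the equality conditions in the proof of Theorem~\ref{th:holder}. The bound $\norm{v_y-v_z}^2\le L\norm{y-z}(\norm{v_y}+\norm{v_z})$ came from strong convexity together with the Lipschitz perturbation of $\phi_x$; it is tight only when the two optimal values $\theta_f$ essentially coincide and the perturbing gradient displacement is aligned with the minimizer. Equivalently (using item~2, that $x\mapsto\norm{\Lambda f(x)}$ is Lipschitz) the projection must keep an almost constant norm while its direction turns at rate $t^{1/2}$; that is, the minimal-norm element must slide along a supporting face whose outward normal rotates like $t^{1/2}$. Having fixed this target, I would write down the parametric dual program \eqref{eq:dual} along the segment, identify the objectives active at $x_0$, and solve for the optimal weights $\alpha(t)$, hence for $\Lambda f(x_0+tw)=-\sum_i\alpha_i(t)\nabla f_i(x_0+tw)$; the claim then reduces to a one-variable estimate on this explicit curve.

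The delicate point is the lower bound, and with it the choice of configuration, because in the simplest situations the projection is in fact Lipschitz. If only two gradients are relevant, or more generally while a fixed set of objectives stays active, the foot of the origin is a bounded rational function of the (smooth) gradient data, hence locally Lipschitz, and a single active-set change merely produces a corner. A genuine $t^{1/2}$ rate must therefore be manufactured at a \emph{degenerate} base point, where the face carrying the minimal-norm element collapses as $t\to0$ (the active gradients fail the second-order regularity that makes the relevant Gram system invertible), so that the dual weights $\alpha(t)$ acquire a square-root dependence on $t$ even though the gradients themselves depend smoothly on $t$. Engineering this degeneracy so that the resulting $\Lambda f$ is provably not H\"older with any exponent larger than $1/2$, while keeping the projection strictly inside the relevant face (so that the nearest point is not merely a vertex that moves smoothly), is the main obstacle.

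Finally I would verify the side conditions of Theorem~\ref{th:holder} for the constructed example: that the $f_i$ can be taken with $L$-Lipschitz gradients on a bounded convex $W\subseteq\Omega$ containing the segment, which is automatic if the $f_i$ are polynomial since their gradients are Lipschitz on any bounded set; and that $x_0$ is not Pareto critical, so that $\Lambda f$ stays bounded away from $0$ and the $t^{1/2}$ oscillation is a true failure of the Lipschitz property rather than an artifact of the norm tending to zero.
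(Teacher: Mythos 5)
There is a genuine gap: your proposal is a plan, not a proof. The explicit instance that saturates the exponent---which is the \emph{entire} content of the proposition---is never produced; you yourself defer it as ``the main obstacle.'' The reduction via Corollary~\ref{cr:mnorm} to a moving-projection problem and the diagnosis that a degenerate base point (collapsing face / singular Gram system) is necessary are correct as far as they go, but they prove nothing by themselves. For comparison, the paper's construction is short and concrete: $f_1(r,s)=(r^2+s^2)/2$, $f_2(r,s)=r$ on $\R^2$, with the two families $y_t=\cos t\,(\cos t,\sin t)$ and $z_t=(1,\cos t\sin t)$; a direct computation gives $\Lambda f(y_t)=-y_t$ and $\Lambda f(z_t)=-(1,0)$, hence $\norm{\Lambda f(y_t)-\Lambda f(z_t)}=\sin t$ while $\norm{y_t-z_t}=\sin^2 t$, which rules out every H\"older exponent $\eta>1/2$.

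Moreover, the template you committed to---a single base point $x_0$ and a fixed ray, with $\norm{\Lambda f(x_0+tw)-\Lambda f(x_0)}\geq c\,t^{1/2}$ for small $t$---points in the wrong direction, and for the polynomial data you suggest it cannot be realized at all. With polynomial gradients, $t\mapsto \Lambda f(x_0+tw)$ is a semialgebraic curve; on some interval $(0,\delta)$ the optimal active set and the rank of the associated Gram system are constant, so the projection is a bounded rational function of $t$ there, hence extends analytically to $t=0$ and is one-sidedly Lipschitz at the base point. The paper's example illustrates this: along each family separately, and indeed along every straight line through the degenerate point $(1,0)$ (where $\nabla f_1=\nabla f_2$ and the hull collapses), $\Lambda f$ is Lipschitz in the parameter; the $t^{1/2}$ rate emerges only by comparing the \emph{two} families against each other, because their mutual distance $\sin^2 t$ is quadratically smaller than the distance $\sin t$ between the corresponding projections. (Note also that your design requirement that the projection stay strictly inside a face is not what the sharp example does: there the minimal-norm element sits at a \emph{vertex} in both families, and it is the switch of vertex across the collapsing segment that produces the rate; and since the H\"older quotient is a supremum over pairs $y,z$, a two-family comparison is all the lower bound requires.) To repair your argument you would have to abandon the single-ray ansatz and exhibit two nearby curves straddling a degenerate active-set boundary, exactly as the paper does.
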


\begin{proof}
  Let
  \begin{align}
    \label{eq:cex}
    \Omega=\R^2,\;\; f_1(r,s)=\dfrac{r^2+s^2}2,\;\;
    f_2(r,s)=r, \;\;\;\;f(x)=(f_1(x),f_2(x))\;\;
    \text { for }x\in\R^2,
  \end{align}
  and 
  define, for $0<t<\pi/2$,
  \begin{align}
    \label{eq:xpxpp}
    y_t=\cos t \;  (\cos t,\sin t),\qquad
    z_t=(1,\cos t\,\sin t).
  \end{align}
  Direct use of these definitions yields
  \begin{align*}
    \nabla f_1(y_t)-\nabla f_2(y_t)=y_t-(1,0)=(-\sin^2t,\sin t \cos t)
    =\sin t(-\sin t,\cos t)\perp \nabla f_1(y_t)
  \end{align*}
  and
  \begin{align*}
    \nabla f_1(z_t)-\nabla f_2(z_t)=z_t-(1,0)=(0,\cos t\sin t)\perp
    \nabla f_2(z_t).
  \end{align*}
  Therefore, $\nabla f_1(y_y)$ is the minimal norm element on the segment
  $[\nabla f_1(y_t),\nabla f_2(y_t)]$,
  $\nabla f_2(z_t)$   is the minimal norm element on the segment
  $[\nabla f_1(z_t),\nabla f_2(z_t)]$, and it follows from
  Corollary~\ref{cr:mnorm} that
  \begin{align*}
    \Lambda_f(y_t)=-\nabla f_1(y_t)=-y_t,
    \qquad
    \Lambda_f(z_t)=-\nabla f_2(z_t)=-(1,0).
  \end{align*}
  Combining the above equalities with \eqref{eq:xpxpp}
  and the assumption $0<t<\pi/2$ we conclude that
  \begin{align*}
    \norm{\Lambda_f(y_t)-\Lambda_f(z_t)}=\norm{(1,0)-y_t}=\sin t,\qquad
    \norm{y_t-z_t}=(\sin t)^2.
  \end{align*}

  Let $V$ be a neighborhood of $(1,0)$ and $\eta\in(0,1]$.
  Since $y_t\in V$ and $z_t\in V$ for
  $t>0$ small enough
  \begin{align*}
    \sup\Set{
    \dfrac{\norm{\Lambda f(y)-\Lambda f(y)}}{\norm{y-z}^\eta}\;\colon
    y,z\in V, \;y\neq z
    }
    &\geq\lim\sup_{t\to 0^+}
      \dfrac{\norm{\Lambda f(y_t)-\Lambda f(z_t)}}{\norm{y_t-z_t}^\eta}\\
    &=\lim\sup_{t\to 0^+}\dfrac{\sin\,t}{(\sin\,t)^{2\eta}}.
  \end{align*}
   To end the proof, observe that the above $\lim\sup$ is $+\infty$
   for $\eta\in (1/2,1]$.
\end{proof}



\end{document}